\numberwithin{equation}{section}
\newtheorem{theorem}{Theorem}
\newtheorem{lemma}[theorem]{Lemma}
\theoremstyle{definition}
\newtheorem{remark}{Remark}[section]
\newtheorem{hypothesis}{Hypothesis}
\newcommand{\sethypothesistag}[1]{
  \let\oldthehypothesis\thehypothesis
  \renewcommand{\thehypothesis}{#1}
  \g@addto@macro\endhypothesis{
    \addtocounter{hypothesis}{-1}
    \global\let\thehypothesis\oldthehypothesis}
}
\newcommand{\hypref}[1]{\hyperref[#1]{\normalfont\textbf{(H\ref*{#1})}}}
\renewcommand{\vec}[1]{\mathbf{#1}}
\newcommand{\ee}{\mathrm{e}} 
\newcommand{\dd}{\mathrm{d}} 
\newcommand{\R}{\mathbb{R}} 
\newcommand{\N}{\mathbb{N}} 
\newcommand{\fsL}{\mathsf{L}} 
\newcommand{\fsWL}{{\mathcal L}_\infty} 
\newcommand{\ip}[2]{\left\langle{#1},{#2}\right\rangle}
\newcommand{\lAvg}[1]{\langle{#1}\rangle} 
\newcommand{\diss}{\mathcal D} 
\newcommand{\supp}{\operatorname{supp}} 
\newcommand{\init}{\mathrm{in}} 
\newcommand{\collFP}{L_{\mathrm{FP}}} 
\newcommand{\collBGK}{L_{\mathrm{BGK}}} 
\newcommand{\fSt}{f_{\infty}} 
\newcommand{\sU}{\mathsf U}
\begin{document}

\title{\(\fsL^2\)-Hypocoercivity for non-equilibrium kinetic equations}

\author{Helge Dietert} \address{Université Paris Cité and
  Sorbonne Université, CNRS\\ IMJ-PRG, F-75006 Paris, France.}
\email{helge.dietert@imj-prg.fr}

\begin{abstract}
  The recent work
  \cite{dietert-hérau-hutridurga-mouhot-2022-preprint-quantitative-geometric-control-linear-kinetic-theory}
  developed a general framework to show hypocoercivity for a
  stationary Gibbs state and allowed spatial degeneracy, confining
  potentials and boundary conditions. In this work, we show that the
  explicit energy approach in the weighted \(\fsL^2\) space works for
  general non-equilibrium steady states and that it can be adapted to
  cases with weaker confinement leading to algebraic decay.
\end{abstract}

\maketitle

\section{Introduction}

\subsection{Motivation}

The work
\cite{dietert-hérau-hutridurga-mouhot-2022-preprint-quantitative-geometric-control-linear-kinetic-theory}
studies the hypocoercivity for general linear kinetic equations
allowing spatial degeneracy, confining potentials and boundary
conditions, but focuses on Gibbs states where the transport and
collision operator vanish separately for the stationary state.

The basic idea (see also the exposition
\cite{dietert-hérau-hutridurga-mouhot-2022-preprint-trajectorial}) is
to use the transport to transfer the information in a good region with
dissipation which is ensured by a geometric control condition
generalising the condition found in
\textcite{bernard-salvarani-2013-boltzmann} and
\textcite{han-kwan-leautaud-2015-geometric-boltzmann}.  In the good
region with dissipation, we can reduce the required control to the
spatial density which is then controlled by a construction using the
Bogovskiǐ operator inspired from
\cite{albritton-armstrong-mourrat-novack-2019-preprint-variational-fokker-planck}
and similar to the works
\cite{cao-lu-wang-2023-explicit-l-convergence-rate-estimate,brigati-2021-preprint-time-fokker-planck,brigati-stoltz-2023-preprint-how-fokker}.

In this work, we are interested in the decay towards non-equilibrium
stationary states which are created by a non-isothermal collision
operator. Here the stationary state is not explicit and recent works
on the decay rate have been achieved by Doeblin and Harris theorem,
see e.g.\
\textcite{bernou-2023-preprint-asymptotic-behavior-degenerate-linear-kinetic}
for the non-isothermal problem and \textcite{yoldas-2023-harris} for a
general review on the Harris theorem in hypocoercive equations.  The
energy estimate approach of
\textcite{dolbeault-mouhot-schmeiser-2015-hypocoercivity} has been
extended to some non-explicit states in, e.g.,
\cite{bouin-hoffmann-mouhot-2017-exponential,evans-menegaki-2023-preprint-properties-non-equilibrium-steady-states-bgk}.

The aim of this work is to show that the approach with the Bogovoskiǐ
operator covers these more general cases in a natural way leading to a
robust and quantitative method. Moreover, we will show that it
naturally covers weak confinement forces.  As the treatment of spatial
degeneracy and boundary conditions can be adapted from
\cite{dietert-hérau-hutridurga-mouhot-2022-preprint-quantitative-geometric-control-linear-kinetic-theory},
we focus here on the decay without these difficulties for better
readability.

\subsection{Considered problem}

Assume the evolution of a kinetic density \(f=f(t,x,v)\) for
\(x,v \in \R^d\), \(d \in \N\), given by the linear evolution
\begin{equation}
  \label{eq:kinetic-evolution}
  \partial_t f + v \cdot \nabla_x f + \nabla_v\cdot (G f)
  = Lf,
\end{equation}
where \(G=G(x,v)\) is an arbitrary smooth vector field modelling an
external force and \(L\) is a collision operator. Modelling a
non-isothermal background, we assume a function \(M=M(x,v)\) such that
for every \(x \in \R^d\), the function \(v \mapsto M(x,v)\) is a
probability distribution to which the collision operator \(L\) drives
the density \(f\).

The aim is now to understand the decay behaviour of
\eqref{eq:kinetic-evolution} in view of the degenerate dissipation as
\(L\) only changes the velocity distribution.

For our discussion, we take either a Fokker-Planck like collision
operator \(L = \collFP\) defined by
\begin{equation}
  \label{eq:def-coll-fp}
  \collFP f
  := \nabla_v \left( M \cdot \nabla_v\left(\frac{f}{M}\right) \right)
\end{equation}
or a BGK operator \(L = \collBGK\) defined by
\begin{equation}
  \label{eq:def-coll-bgk}
  \collBGK f
  := \lAvg{f} M - f,
\end{equation}
where we introduced the general definition \(\lAvg{\cdot}\) for the
spatial density, i.e.\
\begin{equation}
  \label{eq:definition-local-density}
  \lAvg{g}(t,x) := \int_{v \in \R^d} g(t,x,v)\, \dd v.
\end{equation}

The first assumption is the existence of a stationary state \(\fSt\).
\begin{hypothesis}[Stationary state]\label{h:stationary}
  Assume a stationary state \(\fSt = \fSt(x,v) \ge 0\) such that
  \begin{equation}
    \label{eq:def-stationary-state}
    v \cdot \nabla_x \fSt + \nabla_v \cdot (G \fSt) = L \fSt
  \end{equation}
  which is normalised as
  \begin{equation}
    \label{eq:normalisation-stationary-state}
    \int_{x,v \in \R^d} \fSt\, \dd x\, \dd v = 1
  \end{equation}
  and satisfies
  \begin{equation}
    \label{eq:lower-bound-stationary-state}
    \inf_{x} \inf_{|v| \le 1} \frac{\fSt(x,v)}{\lAvg{\fSt}(x)}
    =: c_\infty > 0.
  \end{equation}
  If \(L=\collFP\), assume also that
  \begin{equation}
    \label{eq:regularity-m-fp}
    \sup_{x} \sup_{|v| \le 1} \frac{|\nabla_v M|}{M}
    =: c_{\mathrm{MFP}} < \infty
  \end{equation}
  and if \(L=\collBGK\) assume that
  \begin{equation}
    \label{eq:regularity-m-bgk}
    \sup_{x} \int_{|v| \le 1} M^2 \, \frac{\lAvg{\fSt}}{\fSt}\, \dd v
    =: c_{\mathrm{MBGK}} < \infty.
  \end{equation}
\end{hypothesis}

\begin{remark}
  The stationary state can be found by several strategies. A first
  approach is by a perturbative argument of an equilibrium state with
  an explicit solution. Another source of non-equilibrium states in
  kinetic theory is obtained through fixed-point arguments like in
  \textcite{evans-menegaki-2021-existence-bgk}. Yet another approach
  is given through the Krein-Rutman theory, see
  \cite{fonte-gabriel-mischler-2023-preprint-krein-rutman}.
\end{remark}

\begin{remark}
  The conditions \eqref{eq:lower-bound-stationary-state},
  \eqref{eq:regularity-m-fp} and \eqref{eq:regularity-m-bgk} are
  concerned with velocities \(|v| \le 1\). The condition
  \eqref{eq:lower-bound-stationary-state} is used to gain a control on
  the spatial density \(\lAvg f\) and \eqref{eq:regularity-m-fp} and
  \eqref{eq:regularity-m-bgk}, respectively, are used to bound error
  terms.
\end{remark}

The relative entropy is a natural distance towards a stationary state
and in the linearised setting this motivates the use of the weighted
\(\fsL^2\) space \(\fsWL^2(\R^{2d})\) defined by the norm
\begin{equation}
  \label{eq:weighted-l2}
  \| g \|_{\fsWL^2(\R^{2d})}^2 := \int_{x,v \in \R^d} |g(x,v)|^2\,
  \frac{\dd x\, \dd v}{f_\infty}
\end{equation}
for a function \(g : \R^{2d} \to \R\). If we consider a function
\(g=g(t,x,v)\) over a time interval \([0,T]\), we similarly use the
weighted space \(\fsWL^2([0,T]\times \R^{2d})\) defined by
\begin{equation}
  \label{eq:weighted-l2-time}
  \| g \|_{\fsWL^2([0,T]\times \R^{2d})}^2
  := \int_0^T \int_{x,v \in \R^d} |g(t,x,v)|^2\,
  \frac{\dd x\, \dd v}{f_\infty}\dd t.
\end{equation}
Henceforth, we will always work in these weighted spaces.

The proof for exponential decay is based on a local coercivity in the
velocity variable and a spatial coercivity for the density which is
similar to the spatial diffusion in the fluid limit. The obtained
decay rate is then constructive and depends on the coercivity of these
two components.

\begin{hypothesis}[Local coercivity]\label{h:local}
  Assume that there exists \(\lambda_1 > 0\) such that for all
  \(x \in \R^d\) and all \(g=g(v)\) it holds for the case
  \(L = \collFP\) that
  \begin{equation}
    \label{eq:local-coercivity-fp}
    \int_{v \in \R^d}
    \left|
      g - \lAvg{g} \frac{\fSt}{\lAvg{\fSt}}
    \right|^2
    \frac{\dd v}{\fSt}
    \le \lambda_1
    \int_{v \in \R^d}
    \left|
      \nabla_v \left(\frac{g}{\fSt}\right)
    \right|^2
    \fSt\, \dd v
  \end{equation}
  and for the case \(L = \collBGK\) that
  \begin{equation}
    \label{eq:local-coercivity-bgk}
    \begin{aligned}
      &\int_{v \in \R^d}
        \left|
        g - \lAvg{g} \frac{\fSt}{\lAvg{\fSt}}
        \right|^2
        \frac{\dd v}{\fSt} \\
      &\le \frac{\lambda_1}{2}
        \int_{v,v_* \in \R^{d}}
        \left(\frac{g(v)}{\fSt(x,v)}
        - \frac{g(v_*)}{\fSt(x,v_*)}
        \right)^2
        \fSt(x,v) M(x,v_*)\,
        \dd v\, \dd v_*.
    \end{aligned}
  \end{equation}
\end{hypothesis}

\begin{hypothesis}[Spatial coercivity]\label{h:spatial}
  Define the weight \(w=w(x)\) by
  \begin{equation}
    \label{eq:def-weight}
    w(x) = 1 +
    \frac{\int_{|v| \le 1} \fSt |G(x,v)|^2\, \dd v}{\lAvg{\fSt}}
    +
    \left(
      \int_{|v| \le 1}
      \left|\nabla_x\left(\frac{\fSt}{\lAvg{\fSt}}\right)\right|
      \dd v
    \right)^2
    +
    \left|
      \frac{\nabla_x \lAvg{\fSt}}{\lAvg{\fSt}}
    \right|^2
    .
  \end{equation}
  Then assume that there exist a time \(T>0\) and a constant
  \(\lambda_2\) such that for every \(g=g(t,x)\), \(t \in [0,T]\),
  \(x \in \R^d\), there exists a vector field
  \(\vec F : [0,T] \times \R^d \to \R^{1+d}\) satisfying
  \begin{equation}
    \label{eq:solution-bogovoskii}
    \left\{
      \begin{aligned}
        &\nabla_{t,x} \cdot \vec F(t,x) = g(t,x)
        & &\text{in } (t,x) \in [0,T]\times \R^d,\\
        &\vec F(0,x) = \vec F(T,x) = 0 & &\text{on } x \in \R^d
      \end{aligned}
    \right.
  \end{equation}
  and
  \begin{equation}
    \label{eq:bound-bogovoskii}
    \int_{t,x \in [0,T] \times \R^d} \left[|\vec F|^2 w + |\nabla \vec F|^2\right]
    \frac{\dd t\, \dd x}{\lAvg{\fSt}}
    \le \lambda_2
    \int_{t,x \in [0,T] \times \R^d} |g|^2
    \frac{\dd t\, \dd x}{\lAvg{\fSt}}.
  \end{equation}
\end{hypothesis}
\begin{remark}\label{rem:spatial-poincare}
  Under mild regularity assumptions on \(\lAvg{\fSt}\), the
  construction in
  \cite[Section~2]{dietert-hérau-hutridurga-mouhot-2022-preprint-quantitative-geometric-control-linear-kinetic-theory}
  shows that \hypref{h:spatial} is implied by a suitable Poincaré
  inequality, see \cref{sec:bogovskii-poincare}.  In the case that
  \(w \eqsim 1 + |\nabla \log\lAvg{\fSt}|^2\), the required Poincaré
  inequality becomes
  \begin{equation}\label{eq:spatial-poincare}
    \int |h|^2\, w\, \lAvg{\fSt}\, \dd t\, \dd x
    \lesssim
    \int |\nabla h|^2\, \lAvg{\fSt}\, \dd t\, \dd x
  \end{equation}
  for all functions \(h\) with \(\int h \lAvg \fSt = 0\).
\end{remark}

Under these assumption, we show exponential decay towards the
equilibrium distribution. As the overall mass is conserved, this is
equivalent to saying that a perturbation with zero averages converges
to zero.
\begin{theorem}\label{thm:exponential-decay}
  Assume \hypref{h:stationary}, \hypref{h:local}, \hypref{h:spatial}.
  Then there exist constructive constants \(\lambda,C>0\) such that a
  solution \(f=f(t,x,v)\) of \eqref{eq:kinetic-evolution} with initial
  data \(f_\init=f_\init(x,v)\) of zero average
  \(\int_{x,v} f_\init\, \dd x\, \dd v = 0\) satisfies for all \(t>0\)
  \begin{equation}
    \label{eq:exponential-decay}
    \| f \| \le C \ee^{-\lambda t} \|f_\init\|.
  \end{equation}
\end{theorem}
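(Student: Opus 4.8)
The plan is to run an \(\fsWL^2\) energy--dissipation scheme: first extract a clean nonnegative dissipation from \eqref{eq:kinetic-evolution}, then control the non-fluid part of \(f\) by this dissipation through \hypref{h:local}, and finally control the remaining fluid (density) part over a window \([0,T]\) by the Bogovskiǐ field of \hypref{h:spatial}; an iteration in time then yields \eqref{eq:exponential-decay}. For the energy balance, writing \(\ip{\cdot}{\cdot}\) for the \(\fsWL^2\) inner product and integrating by parts in \(x\) and \(v\), the streaming and force terms tested against \(f/\fSt\) collapse to
\[
  \int_{x,v} \frac{f}{\fSt}\bigl(v\cdot\nabla_x f + \nabla_v\cdot(Gf)\bigr)\,\dd x\,\dd v
  = \tfrac12\int_{x,v} \frac{f^2}{\fSt^2}\bigl(v\cdot\nabla_x\fSt + \nabla_v\cdot(G\fSt)\bigr)\,\dd x\,\dd v,
\]
which by \eqref{eq:def-stationary-state} equals \(\tfrac12\int \frac{f^2}{\fSt^2}L\fSt\). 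Hence \(\frac{\dd}{\dd t}\tfrac12\|f\|^2=-\diss(f)\), with \(\diss(f):=-\ip{f}{Lf}+\tfrac12\int\frac{f^2}{\fSt^2}L\fSt\). The key point is that the non-equilibrium drift, which would break skew-symmetry of the streaming operator, is \emph{absorbed} by the stationary identity: expanding \(L\fSt\) and integrating by parts once more shows that \(\diss(f)\) equals exactly \(\int\fSt\,|\nabla_v(f/\fSt)|^2\) when \(L=\collFP\), and exactly the right-hand side of \eqref{eq:local-coercivity-bgk} with \(\lambda_1\) set to \(1\) when \(L=\collBGK\). In particular \(\diss\ge0\) and \(\|f(t)\|\) is non-increasing.

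Second, microscopic coercivity. Since \(\diss\) is precisely the quadratic form controlled in \hypref{h:local}, integrating that hypothesis in \(x\) gives \(\diss(f)\ge\lambda_1^{-1}\|f-\Pi f\|^2\), where \(\Pi f:=\lAvg f\,\fSt/\lAvg\fSt\) is the projection onto local equilibria. Thus the dissipation already controls the whole non-fluid part \(f-\Pi f\); it remains to control \(\Pi f\), equivalently the weighted density, as \(\|\Pi f\|^2=\int \lAvg f^2/\lAvg\fSt\,\dd x\).

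Third --- and this is the crux --- I would prove the macroscopic coercivity \(\int_0^T\|\Pi f\|^2\le C\int_0^T\diss(f)\). Mass conservation \(\lAvg{Lf}=0\) turns \eqref{eq:kinetic-evolution} into the continuity equation \(\partial_t\lAvg f+\nabla_x\cdot\lAvg{vf}=0\) and preserves \(\int\lAvg f=0\). Taking the field \(\vec F=(F^t,F^x)\) from \hypref{h:spatial} with \(\nabla_{t,x}\cdot\vec F=\lAvg f\), I would test \eqref{eq:kinetic-evolution} against the velocity-affine function \(\phi=(F^t+v\cdot F^x)/\lAvg\fSt\), whose space--time gradient is matched to the streaming operator so that \(\nabla_{t,x}\cdot\vec F=\lAvg f\) reproduces \(\int_0^T\|\Pi f\|^2\) as the principal term, the endpoint conditions \(\vec F(0)=\vec F(T)=0\) removing all time-boundary terms. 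The remaining contributions are of three types, each designed to match one part of the data: the force term \(-\int Gf\cdot F^x/\lAvg\fSt\) is controlled by the \(G\)-part of \eqref{eq:def-weight}; the first velocity moment \(\lAvg{vLf}\) of the collision operator is controlled by \(\diss^{1/2}\) after passing between the \(M\)- and \(\fSt\)-weighted low-velocity quantities by means of \eqref{eq:regularity-m-fp} resp.\ \eqref{eq:regularity-m-bgk}; and the commutators arising when \(\nabla_x\) hits \(\lAvg\fSt\) and \(\fSt/\lAvg\fSt\) are controlled by the corresponding parts of \(w\). Throughout, the dissipative flux \(\lAvg{v(f-\Pi f)}\) restricted to \(|v|\le1\) is bounded by \(\lAvg\fSt^{1/2}\diss^{1/2}\) via \hypref{h:local} and \eqref{eq:lower-bound-stationary-state}. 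Bounding the \(\vec F\)-factors through \eqref{eq:bound-bogovoskii} --- using \(\int|\vec F|^2w/\lAvg\fSt\le\lambda_2\int\lAvg f^2/\lAvg\fSt\) for the force and commutator terms and \(\int|\nabla\vec F|^2/\lAvg\fSt\le\lambda_2\int\lAvg f^2/\lAvg\fSt\) for the rest --- each error acquires a factor \(\diss^{1/2}\bigl(\int_0^T\|\Pi f\|^2\bigr)^{1/2}\), and Young's inequality closes the estimate.

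Finally, adding the microscopic and macroscopic bounds gives \(\int_0^T\|f\|^2\le C'\int_0^T\diss(f)=\tfrac{C'}{2}\bigl(\|f(0)\|^2-\|f(T)\|^2\bigr)\). Because \(\|f(t)\|\) is non-increasing, \(T\|f(T)\|^2\le\int_0^T\|f\|^2\), whence \(\|f(T)\|^2\le\theta\|f(0)\|^2\) with \(\theta=\tfrac{C'}{2T+C'}<1\). As \eqref{eq:kinetic-evolution} is autonomous and preserves the zero-average class, iterating this contraction over the windows \([nT,(n+1)T]\) and using monotonicity inside each window yields \eqref{eq:exponential-decay} with the explicit \(\lambda=-\tfrac{1}{2T}\log\theta\) and \(C=\theta^{-1/2}\). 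I expect the entire difficulty to reside in the third step: arranging that \emph{every} error term genuinely carries a factor of \(\diss^{1/2}\) so that it can be absorbed, in particular taming the anisotropy of the non-equilibrium stationary state together with the force and the spatial variation of \(\fSt/\lAvg\fSt\) through the weight \(w\); the restriction to \(|v|\le1\) in \hypref{h:stationary} and \hypref{h:local} is exactly what renders these velocity moments controllable.
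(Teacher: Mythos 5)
Your overall architecture (dissipation identity, microscopic coercivity from \hypref{h:local}, pairing the density with the Bogovskiǐ field, and the final contraction-and-iteration argument) coincides with the paper's, and steps one, two and four are correct as written. The gap is exactly where you locate the crux: step three, as you propose it, does not go through for a non-equilibrium stationary state. Testing \eqref{eq:kinetic-evolution} against the velocity-affine function \(\phi=(F^t+v\cdot F^x)/\lAvg{\fSt}\) reproduces \(\int_0^T\int\lAvg{f}^2/\lAvg{\fSt}\) as the principal term only if the local velocity moments of the stationary state are normalised, i.e.\ \(\int v\,\fSt/\lAvg{\fSt}\,\dd v=0\) and \(\int v\otimes v\,\fSt/\lAvg{\fSt}\,\dd v=I\). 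This holds for factorised Gibbs states (Gaussian in \(v\)), which is why the DMS-type auxiliary test function works there, but it fails precisely for the non-isothermal states this paper targets. Writing \(m_1=\int v\,\fSt/\lAvg{\fSt}\,\dd v\) and \(m_2=\int v\otimes v\,\fSt/\lAvg{\fSt}\,\dd v\), the projection part of your pairing produces
\begin{equation*}
  \int \frac{\lAvg{f}}{\lAvg{\fSt}}
  \left[\partial_t F^t + m_1\cdot\left(\nabla_x F^t+\partial_t F^x\right) + m_2 : \nabla_x F^x\right]
  \dd t\, \dd x ,
\end{equation*}
and the mismatch terms, e.g.\ \(\int \lAvg{f}\,(m_2-I):\nabla_x F^x/\lAvg{\fSt}\), carry \emph{no} factor of \(\diss^{1/2}\): by Cauchy--Schwarz and \eqref{eq:bound-bogovoskii} they are of size \(\lambda_2^{1/2}\lVert\lAvg{f}\rVert^2\), i.e.\ the same order as the principal term with an \(O(1)\) coefficient, so Young's inequality cannot absorb them. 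A second, independent obstruction is that \(\phi\) is unbounded in \(v\): every pairing (the streaming term, the flux \(\lAvg{vf}\), the moment \(\lAvg{vLf}\)) requires second or higher velocity moments of \(\fSt\) and of \(f\), which \hypref{h:stationary} does not provide --- it only controls \(\fSt\) on \(|v|\le 1\). Truncating the flux to \(|v|\le 1\), as you suggest at the end, does not repair this, since the truncation itself generates uncontrolled errors and the moment mismatch of \(\Pi f\) persists.

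The missing idea is the paper's \cref{thm:construction-test-function}: instead of the functions \(1\) and \(v_j\), one builds test functions \(\psi_i\) supported in \(\R^d\times B_1\) whose \(\fSt\)-weighted moments are \emph{exactly} normalised, \(\int\fSt\psi_i/\lAvg{\fSt}\,\dd v=\delta_{i0}\) and \(\int\fSt\psi_i v_j/\lAvg{\fSt}\,\dd v=\delta_{ij}\), by inverting the matrix \(\int \fSt\, v_iv_j\,\chi(v)/\lAvg{\fSt}\,\dd v\), which is uniformly invertible thanks to the lower bound \eqref{eq:lower-bound-stationary-state}. This one construction removes both obstructions at once: the compact support in \(v\) eliminates all moment assumptions, and the normalisation makes the principal term come out exactly as \(\partial_i(\lAvg{f}/\lAvg{\fSt})\), yielding the representation \(\partial_i(\lAvg{f}/\lAvg{\fSt})=K_i+\sum_j\partial_j J_{ij}\) with \(K\) and \(J\) genuinely bounded by the dissipation; pairing this representation with \(\vec F\) (rather than testing the equation with a field-dependent function) then closes the estimate. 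Without this or an equivalent device, your step three fails, and with it the claimed bound \(\int_0^T\lVert\Pi f\rVert^2\lesssim\int_0^T\diss\).
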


\begin{remark}[Gibbs states]
  The result also covers the traditional Gibbs states, where \(M\) is
  a Gaussian with constant temperature, say
  \(M(x,v) = (2\pi)^{-d/2} \ee^{-v^2/2}\) and \(G = - \nabla_x \phi\)
  for a potential \(\phi\).  Then \(\fSt = Z^{-1} \ee^{-v^2-\phi}\) is
  a stationary state where \(Z\) is a normalisation constant.  In this
  case, the required Poincaré inequality is
  \begin{equation}\label{eq:gibbs-poincare}
    \int |h|^2 (1+|\nabla \phi|^2) \ee^{-\phi}\, \dd t\, \dd x
    \lesssim
    \int |\nabla h|^2 \ee^{-\phi}\, \dd t\, \dd x,
  \end{equation}
  which is classical in this setting and the gained weight
  \(|\nabla \phi|^2\) is well-known.

  In this setting the transport and collision operator vanish
  separately which
  \textcite{bouin-dolbeault-ziviani-2023-preprint-l-hypocoercivity-fokker-planck-gibbs}
  calls factorised Gibbs state.  As in this work, they show decay
  depending on a local coercivity and a spatial dispersion
  corresponding to \eqref{eq:gibbs-poincare}.
\end{remark}

\begin{remark}
  The strategy can incorporate boundary conditions and more general
  collision operators, see
  \cite{dietert-hérau-hutridurga-mouhot-2022-preprint-quantitative-geometric-control-linear-kinetic-theory}
  for such results. By using a transfer along the transport, also
  spatial degenerate settings can be controlled.
\end{remark}

In the case of a weak confinement, the required local coercivity
\hypref{h:local} or the spatial coercivity \hypref{h:spatial} fail for
large velocities and large positions, respectively. Using a uniform
moment bound, the weakened coercivity can be interpolated to obtain
polynomial decay, see
\cite{bouin-dolbeault-ziviani-2023-preprint-l-hypocoercivity-fokker-planck-gibbs}
for a review for Gibbs states.

Here we will show that the method can cover these cases as well. We
will demonstrate this by considering the case of a missing spatial
confinement, where we loose a weight in the spatial coercivity and
assume a uniform moment bound.

\sethypothesistag{\ref*{h:spatial}W}
\begin{hypothesis}[Weak spatial coercivity]\label{h:spatial-weak}
  With the weight \(w\) from \eqref{eq:def-weight} assume that there
  exist a time \(T>0\), constants \(k>\ell>0\) and \(\lambda_2,C_k>0\)
  such that for every \(g=g(t,x)\), \(t \in [0,T]\), \(x \in \R^d\)
  there exists a vector field
  \(\vec F : [0,T] \times \R^d \to \R^{1+d}\) satisfying
  \begin{equation}
    \label{eq:solution-bogovoskii-weak}
    \left\{
      \begin{aligned}
        &\nabla_{t,x} \cdot \vec F(t,x) = g(t,x)
        & &\text{in } (t,x) \in [0,T]\times \R^d,\\
        &\vec F(0,x) = \vec F(T,x) = 0 & &\text{on } x \in \R^d
      \end{aligned}
    \right.
  \end{equation}
  and
  \begin{equation}
    \label{eq:bound-bogovoskii-weak}
    \int_{t,x \in [0,T] \times \R^d} \left[|\vec F|^2 w + |\nabla \vec F|^2\right]
    \frac{\dd t\, \dd x}{\lAvg{\fSt}}
    \le \lambda_2
    \int_{t,x \in [0,T] \times \R^d} |g|^2
    (1+|x|^2)^{\ell}
    \frac{\dd t\, \dd x}{\lAvg{\fSt}}.
  \end{equation}
  Moreover, assume that we have a uniform moment bound of the spatial
  density of the solution as
  \begin{equation}
    \label{eq:spatial-moment-bound}
    \sup_{t\ge 0} \int_{x\in\R^d} \lAvg{f}^2 (1+|x|^2)^k\, \dd x
    \le C_k.
  \end{equation}
\end{hypothesis}

The relation of such a weighted Bogosvkiǐ estimate to the classical
weighted Poincaŕe estimates is discussed in
\cref{sec:bogovskii-poincare}.

\begin{theorem}[Polynomial decay]\label{thm:polynomial-decay}
  Assume \hypref{h:stationary}, \hypref{h:local}, \hypref{h:spatial-weak}.
  Then there exist constructive constants \(\lambda,C>0\) such that a
  solution \(f=f(t,x,v)\) of \eqref{eq:kinetic-evolution} with initial
  data \(f_\init=f_\init(x,v)\) of zero average
  \(\int_{x,v} f_\init\, \dd x\, \dd v = 0\) satisfies for all \(t>0\)
  \begin{equation}
    \label{eq:exponential-decay}
    \| f \| \le \frac{C}{(1+t)^{\frac{k}{2\ell}}} \|f_\init\|.
  \end{equation}
\end{theorem}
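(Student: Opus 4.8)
The plan is to rerun the time--window argument behind \cref{thm:exponential-decay}, but to convert the polynomial weight lost in \hypref{h:spatial-weak} into the loss of a power of the energy by interpolating against the uniform moment bound \eqref{eq:spatial-moment-bound}; the resulting nonlinear one--window estimate then gives the algebraic rate through an elementary ODE comparison. Throughout I write \(E(t):=\|f(t)\|^2\) for the weighted energy, set \(\theta:=\ell/k\in(0,1)\), and by linearity I normalise \(\|f_\init\|=1\), restoring the prefactor at the end by homogeneity (so that the moment constant \(C_k\), which scales like \(\|f_\init\|^2\), is what produces the \(\|f_\init\|\) factor in the statement).

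First I would recall that, exactly as for \cref{thm:exponential-decay}, the energy--dissipation balance gives \(\frac{\dd}{\dd t}E(t)=-2\diss(t)\) with an effective dissipation \(\diss\ge 0\) that, by \hypref{h:local}, controls the microscopic part, \(\diss(t)\ge\lambda_1^{-1}\,\|f-\lAvg{f}\,\fSt/\lAvg{\fSt}\|^2\); the non--equilibrium remainder produced by the transport acting on \(\fSt\) is absorbed as there. Integrating \eqref{eq:kinetic-evolution} in \(v\) yields the continuity equation \(\partial_t\lAvg{f}+\nabla_x\cdot\lAvg{vf}=0\), so that \((\lAvg{f},\lAvg{vf})\) is divergence--free in \((t,x)\). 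Testing this against the Bogovskiǐ field \(\vec F\) from \hypref{h:spatial-weak} with \(g=\lAvg{f}\) and integrating by parts in \((t,x)\) --- the temporal boundary terms vanishing since \(\vec F(0)=\vec F(T)=0\), the weight errors in \(\nabla_x\log\lAvg{\fSt}\) and \(G\) being absorbed by the \(|\vec F|^2 w\) term, and the flux \(\lAvg{vf}\) being controlled by \(\diss\) --- reproduces the macroscopic estimate of \cref{thm:exponential-decay}, now carrying the extra weight of \eqref{eq:bound-bogovoskii-weak},
\begin{equation*}
A:=\int_0^T\!\!\int_{\R^d}\frac{\lAvg{f}^2}{\lAvg{\fSt}}\,\dd x\,\dd t
\ \lesssim\
\Bigl(\int_0^T\diss\,\dd t\Bigr)^{1/2}
\Bigl(\int_0^T\!\!\int_{\R^d}\frac{\lAvg{f}^2\,(1+|x|^2)^{\ell}}{\lAvg{\fSt}}\,\dd x\,\dd t\Bigr)^{1/2}.
\end{equation*}

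The new step is the interpolation. For each fixed \(t\), Hölder's inequality interpolates the weighted density between the macroscopic energy and the high moment,
\begin{equation*}
\int_{\R^d}\frac{\lAvg{f}^2(1+|x|^2)^{\ell}}{\lAvg{\fSt}}\,\dd x
\le
\Bigl(\int_{\R^d}\frac{\lAvg{f}^2}{\lAvg{\fSt}}\,\dd x\Bigr)^{1-\theta}
\Bigl(\int_{\R^d}\frac{\lAvg{f}^2(1+|x|^2)^{k}}{\lAvg{\fSt}}\,\dd x\Bigr)^{\theta},
\end{equation*}
and the second factor is bounded uniformly in \(t\) by (the appropriately weighted form of) \eqref{eq:spatial-moment-bound}; call this bound \(M\). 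A Hölder inequality in time then gives the Bogovskiǐ cost \(\le M^{\theta}T^{\theta}A^{1-\theta}\), so the displayed macroscopic estimate self--improves to
\begin{equation*}
A\ \lesssim\ \Bigl(\int_0^T\diss\,\dd t\Bigr)^{1/(1+\theta)}\,(MT)^{\theta/(1+\theta)}.
\end{equation*}
Combining with the microscopic bound and the monotonicity inequality \(T\,E(T)\le\int_0^T E\le A+\lambda_1\int_0^T\diss\), and using \(\int_0^T\diss=\tfrac12\bigl(E(0)-E(T)\bigr)\), I arrive at the nonlinear one--window inequality \(E(0)-E(T)\gtrsim c\,T\,M^{-\theta}E(T)^{1+\theta}\).

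Finally, since all data are time--independent and the moment bound is uniform, the same estimate holds on every window \([nT,(n+1)T]\), giving the recursion \(E_{n+1}\le E_n-c'E_{n+1}^{1+\theta}\); this is the discrete form of \(\dot E\le -c'E^{1+\theta}\) and yields \(E_n\lesssim(1+n)^{-1/\theta}=(1+n)^{-k/\ell}\), hence \(\|f(t)\|\lesssim(1+t)^{-k/(2\ell)}\), and restoring the normalisation gives the claimed bound. The point I expect to require the most care is the interpolation: choosing \(\theta=\ell/k\) so that the weighted Bogovskiǐ cost self--improves and produces exactly the exponent \(k/(2\ell)\), and reconciling the \(\lAvg{\fSt}\)-weight between \eqref{eq:bound-bogovoskii-weak} and the moment bound \eqref{eq:spatial-moment-bound}; verifying that the macroscopic estimate survives the extra polynomial weight, and the ODE comparison itself, are then routine.
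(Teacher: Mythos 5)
Your proposal is correct and takes essentially the same route as the paper: rerun the exponential-decay machinery to obtain the weighted macroscopic estimate, interpolate the \(\ell\)-weighted density against the \(k\)-th moment with exponent \(\theta=\ell/k\) (the paper's \(a\)), and solve the resulting one-window recursion \(E_{n+1}\le E_n-c'E_{n+1}^{1+\theta}\) to reach the rate \((1+t)^{-k/(2\ell)}\). The differences are only presentational --- the paper works with \(Y_n=\|f(nT)\|\) and telescopes \(Y_n^{-2a}\) explicitly where you invoke a discrete ODE comparison, and it splits the exponential/algebraic branches via a minimum rather than your normalisation --- and your flag about reconciling the \(\lAvg{\fSt}\)-weight between \eqref{eq:spatial-moment-bound} and the interpolation is apt, since the paper indeed interpolates in the weighted norms \(\fsWL^2\).
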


\begin{remark}
  A similar statement can be obtained for a weaker coercivity of the
  velocity variable \(v\).
\end{remark}

\subsection{Literature}

The literature on hypocoercivity is enormous by now so that we cannot
give a complete overview. An slightly old overview is given in the
book by \textcite{villani-2009-hypocoercivity} and a newer
introduction can, e.g., be found in the thesis by
\textcite{evans-2019-thesis}.  An overview of the convergence results
for Gibbs states is in
\cite{bouin-dolbeault-ziviani-2023-preprint-l-hypocoercivity-fokker-planck-gibbs}.

The use of the Bogovskiǐ operator is classical in the study of fluid
dynamics, see e.g. the textbook
\cite{galdi-2011-navier-stokes}. However, there it is studied without
a weight which was introduced in
\cite{dietert-hérau-hutridurga-mouhot-2022-preprint-quantitative-geometric-control-linear-kinetic-theory}.
An overview of the used Poincaré inequalities in kinetic theory can,
e.g., be found in the introduction of
\cite{bouin-dolbeault-ziviani-2023-preprint-l-hypocoercivity-fokker-planck-gibbs}.

The field of hypocoercivity is still very active and, in particular,
recently there has been several studies for general states using the
Harris theorem, e.g.,
\cite{canizo-cao-evans-yoldas-2020-hypocoercivity-harriss,cao-2021-fokker-planck,bernou-2023-preprint-asymptotic-behavior-degenerate-linear-kinetic,yoldas-2023-harris}
and using the energy estimates of
\textcite{dolbeault-mouhot-schmeiser-2015-hypocoercivity} in, e.g.,
\cite{bouin-hoffmann-mouhot-2017-exponential,evans-menegaki-2023-preprint-properties-non-equilibrium-steady-states-bgk}.

\section{Proof of exponential decay}
\label{sec:exponential-decay}

In this section, we will prove our main result
\cref{thm:exponential-decay}.  The evolution of
\eqref{eq:kinetic-evolution} is given by a semigroup with generator
\(A\) defined as
\begin{equation}
  \label{eq:def-generator}
  Af = - v \cdot \nabla_x f - \nabla_v \cdot (G f) + Lf.
\end{equation}
The existence of such a semigroup is standard and the semigroup has as
a core smooth functions. Therefore, it suffices to show the a priori
estimates for the claimed result.

Over the function space \(\fsWL^2\) the adjoint operator \(A^*\) is
given by
\begin{equation}
  \label{eq:adjoint-generator}
  A^* = v \cdot \nabla_x - \fSt^{-1} v \cdot \nabla_x \fSt
  + G \cdot \nabla_v
  - \fSt^{-1} G \cdot \nabla_v \fSt
  + L^*,
\end{equation}
where the adjoint for the \(\collFP\) is
\begin{equation}
  \label{eq:adjoint-coll-fp}
  \collFP^* f = \frac{\fSt}{M} \nabla_v \cdot
  \left(M \nabla_v \left(\frac{f}{\fSt}\right)\right)
\end{equation}
and for \(\collBGK\)
\begin{equation}
  \label{eq:adjoint-coll-bgk}
  \collBGK^* f = \fSt \lAvg{f \frac{M}{\fSt}} - f.
\end{equation}
We can then split \(A\) into the symmetric part \(A_s = (A+A^*)/2\)
and the antisymmetric part \(A_a = (A-A^*)/2\). Using that \(\fSt\) is
a stationary solution, i.e.\ \eqref{eq:def-stationary-state} from
\hypref{h:stationary}, yields
\begin{equation}
  \label{eq:generator-sym}
  A_s = - \frac 12 \fSt^{-1} L \fSt + \frac 12 (L+L^*).
\end{equation}

\subsection{Dissipation}

For the evolution, we define the dissipation \(\diss\) as
\begin{equation*}
  \diss := - \frac 12 \frac{\dd}{\dd t} \| f \|_{\fsWL^2}^2
  = -\ip{f}{Af}.
\end{equation*}
The antisymmetric part \(A_a\) vanishes in the dissipation. From the
symmetric part, we find for \(L=\collFP\) that
\begin{equation}
  \label{eq:dissipation-fp}
  \begin{aligned}
    \diss
    &= \frac 12 \int_{x,v \in \R^{d}}
      \left[
      \frac{f^2}{\fSt^2} \collFP \fSt
      - \frac{f}{\fSt} \collFP f
      - \frac{f}{\fSt} \collFP^*f
      \right]
      \dd x\, \dd v\\
    &= \int_{x,v \in \R^{d}}
      \left[
      - \frac{f}{\fSt}
      \nabla_v\left(\frac{f}{\fSt}\right)
      \cdot M
      \nabla_v\left(\frac{\fSt}{M}\right)
      +
      \nabla_v\left(\frac{f}{\fSt}\right)
      \cdot M
      \nabla_v\left(\frac{f}{M}\right)
      \right]
      \dd x\, \dd v\\
    &= \int_{x,v \in \R^{d}}
      \left|
      \nabla_v\left(\frac{f}{\fSt}\right)
      \right|^2
      \fSt
      \dd x\, \dd v\\
  \end{aligned}
\end{equation}
and for \(L = \collBGK\) that
\begin{equation}
  \label{eq:dissipation-bgk}
  \begin{aligned}
    \diss
    &= \frac 12 \int_{x,v \in \R^{d}}
      \left[
      \frac{f^2}{\fSt^2} \collBGK \fSt
      - \frac{f}{\fSt} \collBGK f
      - \frac{f}{\fSt} \collBGK^*f
      \right]
      \dd x\, \dd v\\
    &= \frac 12\int_{x,v \in \R^{d}}
      \left[
      \frac{f^2}{\fSt}
      - f \frac{M}{\fSt} \lAvg{f}
      - \lAvg{f \frac{M}{\fSt}} f
      f^2
      \frac{M \lAvg{\fSt}}{\fSt^2}
      \right]
      \dd x\, \dd v\\
    &= \frac 12 \int_{x,v,v_* \in \R^{d}}
      \left(\frac{f(x,v)}{\fSt(x,v)}
      - \frac{f(x,v_*)}{\fSt(x,v_*)}
      \right)^2
      \fSt(x,v) M(x,v_*)\,
      \dd x\, \dd v\, \dd v_*.
  \end{aligned}
\end{equation}

\subsection{Decay criterion}

By the computation of the dissipation, we see that the dissipation is
non-negative so that we have a contraction semigroup. For exponential
decay, it then suffices to show that for a fixed time \(T>0\) there
exists a constant \(C\) such that any solution satisfies
\begin{equation}\label{eq:exponential-decay-criterion}
  \int_0^T \| f \|_{\fsWL^2}^2\, \dd t
  \le C \int_0^T \diss\, \dd t.
\end{equation}

Indeed if this is true, we can use that the evolution is contractive
to find
\begin{equation*}
  T \| f(T) \|_{\fsWL^2}^2
  \le C \int_0^T \diss\, \dd t
  = C( \| f_\init \|_{\fsWL^2}^2 - \| f(T) \|_{\fsWL^2}^2 )
\end{equation*}
so that
\begin{equation*}
  \| f(T) \|_{\fsWL^2}^2
  \le \left(1+\frac TC\right)^{-1}
  \| f_\init \|_{\fsWL^2}^2.
\end{equation*}
This shows the contraction over the time \(T\) and thus exponential
decay. Hence it only remains to prove
\eqref{eq:exponential-decay-criterion}.

\subsection{Reduction to spatial density}

In order to prove \eqref{eq:exponential-decay-criterion}, we first use
the local coercivity to reduce the problem to the local density. We
find that
\begin{equation*}
  \begin{aligned}
    \int_0^T \| f \|_{\fsWL^2}^2\, \dd t
    &\le
      \int_0^T \int_{x,v\in\R^d}
      2
      \left[
      \left(\lAvg{f}\frac{\fSt}{\lAvg{\fSt}}\right)^2
      +
      \left(f-\lAvg{f}\frac{\fSt}{\lAvg{\fSt}}\right)^2
      \right]
      \frac{\dd x\, \dd v}{\fSt} \dd t \\
    &\le
      2 \int_0^T \int_{x\in\R^d}
      \lAvg{f}^2 \frac{\dd x\, \dd t}{\lAvg{\fSt}}
      + 2 \lambda_1
      \int_0^T \diss\, \dd t,
  \end{aligned}
\end{equation*}
where we used \hypref{h:local} in the second inequality. Hence it
remains to prove that
\begin{equation}\label{eq:spatial-decay-criterion}
  \int_0^T \int_{x\in\R^d}
  \lAvg{f}^2 \frac{\dd x\, \dd t}{\lAvg{\fSt}}
  \lesssim \int_0^T \diss\, \dd t.
\end{equation}

\subsection{Control of spatial density}

In order to control the spatial density \(\lAvg{f}\), we use that the
dissipation gives a control on the gradient \(\nabla_{t,x}\) of
\(\lAvg{f}\). Denoting the time with index \(0\), we will show that
there exist fields \((K_i)_{i=0,\dots,d}\) and \((J_{ij})_{i,j=0,\dots,d}\)
such that for \(i=0,\dots,d\)
\begin{equation}\label{eq:derivatives-density}
  \partial_i \left(\frac{\lAvg{f}}{\lAvg{\fSt}}\right)
  = K_i + \sum_{j=0}^d \partial_j J_{ij}
\end{equation}
and the fields are bounded for \(i=0,\dots,d\) as (recall the weight
\(w\) from \eqref{eq:def-weight} from \hypref{h:spatial})
\begin{equation}\label{eq:bound-k}
  \int_0^T \int_{x\in \R^d} |K_i|^2 \frac{\lAvg{\fSt}}{w}\, \dd x\, \dd t
  \lesssim
  \int_0^T \diss \, \dd t
\end{equation}
and for \(i,j=0,\dots,d\)
\begin{equation}\label{eq:bound-j}
  \int_0^T \int_{x\in \R^d} |J_{ij}|^2 \lAvg{\fSt}\, \dd x\, \dd t
  \lesssim
  \int_0^T \diss \, \dd t.
\end{equation}

The proof is based on a simple lemma, where \(B_1\) denotes the unit
ball in \(\R^d\).
\begin{lemma}\label{thm:construction-test-function}
  Suppose \(\fSt\) satisfies \eqref{eq:lower-bound-stationary-state}
  from \hypref{h:stationary} and take the weight \(w\) of
  \eqref{eq:def-weight} from \hypref{h:spatial}. Then there exist
  functions \(\psi_i=\psi_i(x,v)\) for \(i=0,\dots,d\) with
  \(\supp \psi \subset \R^d \times B_1\) satisfying the bound
  \begin{equation*}
    \sup_{x,v}
    \left( |\psi| + |\nabla_v \psi| + |\nabla_v^2 \psi|
      + \frac{|\nabla_x \psi|}{\sqrt{w}}
    \right)
    \le C
  \end{equation*}
  for a constant \(C\) and the relation
  \begin{equation*}
    \int_{v\in\R^d} \frac{\fSt}{\lAvg{\fSt}} \psi_i\, \dd v
    = \delta_{i0}
  \end{equation*}
  and for \(j=1,\dots,d\)
  \begin{equation*}
    \int_{v\in\R^d} \frac{\fSt}{\lAvg{\fSt}} \psi_i\, v_j\, \dd v
    = \delta_{ij}.
  \end{equation*}
\end{lemma}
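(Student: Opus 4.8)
The plan is to recognise the claim as a uniform-in-\(x\) biorthogonalisation: the \(d+1\) conditions on each \(\psi_i\) only constrain its moments against \(1\) and \(v_1,\dots,v_d\) under the measure \(\frac{\fSt}{\lAvg{\fSt}}\,\dd v\), so I will look for \(\psi_i\) inside the fixed \((d+1)\)-dimensional space spanned by \(\chi,v_1\chi,\dots,v_d\chi\) for a suitable velocity cutoff \(\chi\), with \(x\)-dependent coefficients. Concretely, fix \(\chi \in C_c^\infty(B_1)\) with \(0 \le \chi \le 1\) and \(\chi \equiv 1\) on \(B_{1/2}\); write \(v_0 := 1\); and set \(\phi_k(v) := v_k \chi(v)\) for \(k=0,\dots,d\), so that \(\supp \phi_k \subset B_1\) already gives the support condition.

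Define the symmetric \((d+1)\times(d+1)\) matrix
\[
\Gamma(x)_{mk} := \int_{v \in \R^d} \frac{\fSt(x,v)}{\lAvg{\fSt}(x)}\, v_m v_k\, \chi(v)\, \dd v, \qquad m,k=0,\dots,d,
\]
and take \(\psi_i := \sum_{k=0}^d (\Gamma(x)^{-1})_{ik}\, \phi_k\). The moment relations then hold by construction: written uniformly with \(v_0=1\), they read \(\int \frac{\fSt}{\lAvg{\fSt}} \psi_i\, v_m\, \dd v = \delta_{im}\), and the left-hand side equals \(\sum_k (\Gamma^{-1})_{ik}\Gamma_{mk} = (\Gamma^{-1}\Gamma)_{im} = \delta_{im}\) using that \(\Gamma\) is symmetric.

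The heart of the argument is the uniform invertibility of \(\Gamma(x)\). For \(\xi=(\xi_0,\dots,\xi_d)\) one computes
\[
\xi^\top \Gamma(x)\, \xi = \int_{v\in\R^d} \frac{\fSt}{\lAvg{\fSt}}\, \chi\, \Big(\xi_0 + \sum_{k=1}^d \xi_k v_k\Big)^2 \dd v \ge c_\infty \int_{B_{1/2}} \Big(\xi_0 + \textstyle\sum_k \xi_k v_k\Big)^2 \dd v,
\]
where I used \(\chi\equiv 1\) and the lower bound \eqref{eq:lower-bound-stationary-state} on \(B_{1/2}\). The cross term integrates to zero by oddness and isotropy gives \(\int_{B_{1/2}} v_j v_l\, \dd v = \delta_{jl}\,\tfrac1d\int_{B_{1/2}}|v|^2\, \dd v\), so the right-hand side dominates \(c\,|\xi|^2\) with \(c>0\) depending only on \(d\) and \(c_\infty\). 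Hence \(\Gamma(x)^{-1}\) is bounded uniformly in \(x\), and since each \(\phi_k\) together with \(\nabla_v\phi_k\) and \(\nabla_v^2\phi_k\) is bounded with support in \(B_1\), the bounds on \(|\psi|\), \(|\nabla_v\psi|\) and \(|\nabla_v^2\psi|\) follow at once.

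The only remaining point — and in fact the one dictating the shape of \(w\) — is the weighted bound on \(\nabla_x\psi_i\). Differentiating through \(\nabla_x \Gamma^{-1} = -\Gamma^{-1}(\nabla_x\Gamma)\Gamma^{-1}\) gives \(|\nabla_x\psi_i| \le C\,|\nabla_x\Gamma|\), and since \(\chi,v_m,v_k\) are independent of \(x\),
\[
|\nabla_x\Gamma(x)_{mk}| \le \int_{|v|\le 1} \left|\nabla_x\left(\frac{\fSt}{\lAvg{\fSt}}\right)\right| \dd v \le \sqrt{w(x)},
\]
the last step because this integral squared is one of the summands of \(w\) in \eqref{eq:def-weight}. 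Thus \(|\nabla_x\psi_i|/\sqrt{w} \le C\), completing the list of bounds. I expect the uniform ellipticity estimate to be the only genuinely substantive step; everything else is bookkeeping once the dual-basis viewpoint is adopted.
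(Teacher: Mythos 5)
Your proposal is correct and takes essentially the same route as the paper's proof: the paper also seeks \(\psi_i = M_{ij}^{-1} e_j\) with \(e_j(v) = v_j\chi(v)\) and the same moment matrix, deducing uniform invertibility from \eqref{eq:lower-bound-stationary-state} and the bound \(|\nabla_x \psi|\lesssim\sqrt{w}\) from \eqref{eq:def-weight}. The only difference is that you spell out the uniform ellipticity computation and the differentiation of \(\Gamma^{-1}\), which the paper leaves as assertions.
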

\begin{proof}
  Let \(\phi\) be a smooth cutoff to \(B_1\) and use the notation
  \(v_k=1\) if \(k=0\). Then take the basis functions
  \(e_k(v) = v_k \chi(v)\) for \(k=0,\dots,d\) and consider for a
  fixed point \(x\) in space the matrix
  \begin{equation*}
    M_{ij}
    = \int_{v\in\R^d}
    \frac{\fSt}{\lAvg{\fSt}} v_i e_j\, \dd v
    = \int_{v\in\R^d}
    \frac{\fSt}{\lAvg{\fSt}} v_i v_j\, \chi(v)\, \dd v.
  \end{equation*}
  The matrix \(M\) is symmetric and we find the required functions as
  \begin{equation*}
    \psi_i = M_{ij}^{-1} e_j.
  \end{equation*}
  By the assumed lower bound \eqref{eq:lower-bound-stationary-state}
  from \hypref{h:stationary} we have a uniform lower bound on the
  eigenvalues of the matrix \(M\). Hence \(M^{-1}\) is uniformly
  bounded and the bounds on \(\psi\), \(\nabla_v \psi\) and
  \(\nabla_v^2 \psi\) follow. From \eqref{eq:def-weight} from
  \hypref{h:spatial}, we find that \(|\nabla_x M| \lesssim \sqrt{w}\)
  which implies the claimed bound for \(\nabla_x \psi\).
\end{proof}

In order to obtain \eqref{eq:derivatives-density} for \(i=0,\dots,d\)
we note by linearity that
\begin{equation}
  \label{eq:starting-density-control}
  \begin{aligned}
    &\int_{v\in\R^d}
      (\partial_t - A_a)
      \left(
      \lAvg{f} \frac{\fSt}{\lAvg{\fSt}}
      \right)\,
      \frac{\psi_i}{\lAvg{\fSt}}\, \dd v \\
    &=
      \int_{v\in\R^d}
      (\partial_t - A_a)
      \left(
      \lAvg{f} \frac{\fSt}{\lAvg{\fSt}}
      - f
      \right)\,
      \frac{\psi_i}{\lAvg{\fSt}}\, \dd v
      +
      \int_{v\in\R^d}
      (\partial_t - A_a)
      f\,
      \frac{\psi_i}{\lAvg{\fSt}}\, \dd v.
  \end{aligned}
\end{equation}

In order to compute the LHS of \eqref{eq:starting-density-control},
first note that
\begin{equation*}
  A_a f
  = - v \cdot \nabla_x f
  - \nabla_v\cdot(Gf)
  + \frac 12 \fSt^{-1}
  \left(
    v \cdot \nabla_x \fSt + \nabla_v \cdot(G\fSt)
  \right) f
  + \frac 12 (Lf - L^* f).
\end{equation*}
Moreover, as the evolution preserves the overall mass
\(L^* \fSt = 0\).  As \(L\) is only acting in the \(v\) variable, we
find
\begin{equation*}
  (L-L^*)\left(
    \lAvg{f} \frac{\fSt}{\lAvg{\fSt}}
  \right)
  = \frac{\lAvg{f}}{\lAvg{\fSt}}
  (L-L^*) \fSt
\end{equation*}
so that
\begin{equation*}
  \begin{aligned}
    &\int_{v\in\R^d}
      (\partial_t - A_a)
      \left(
      \lAvg{f} \frac{\fSt}{\lAvg{\fSt}}
      \right)\,
      \frac{\psi_i}{\lAvg{\fSt}}\, \dd v \\
    &=
      \int_{v\in\R^d} (\partial_t + v \cdot \nabla_x)
      \left(
      \lAvg{f} \frac{\fSt}{\lAvg{\fSt}}
      \right)\,
      \frac{\psi_i}{\lAvg{\fSt}}\, \dd v
      +
      \int_{v\in\R^d}
      \nabla_v \left(
      G \lAvg{f} \frac{\fSt}{\lAvg{\fSt}}
      \right)
      \frac{\psi_i}{\lAvg{\fSt}}\, \dd v\\
    &\quad -
      \int_{v\in\R^d}
      \frac{\lAvg{f}}{\lAvg{\fSt}}
      \left(
      v \cdot \nabla_x \fSt + \nabla_v \cdot(G\fSt)
      \right)
      \frac{\psi_i}{\lAvg{\fSt}}\, \dd v\\
    &=
      \int_{v\in\R^d} (\partial_t + v \cdot \nabla_x)
      \left(
      \frac{\lAvg{f}}{\lAvg{\fSt}}
      \right)\,
      \fSt
      \frac{\psi_i}{\lAvg{\fSt}}\, \dd v\\
    &= \partial_i
      \left(
      \frac{\lAvg{f}}{\lAvg{\fSt}}
      \right),
  \end{aligned}
\end{equation*}
where we used the property of \(\psi_i\) from
\cref{thm:construction-test-function} in the last equality.

For the first term of the RHS of \eqref{eq:starting-density-control},
we find
\begin{equation*}
  \begin{aligned}
    &\int_{v\in\R^d}
      (\partial_t - A_a)
      \left(
      \lAvg{f} \frac{\fSt}{\lAvg{\fSt}}
      - f
      \right)\,
      \frac{\psi_i}{\lAvg{\fSt}}\, \dd v \\
    &=
      \int (\partial_t + v\cdot\nabla_x)
      \left(
      \lAvg{f} \frac{\fSt}{\lAvg{\fSt}}
      - f
      \right)\,
      \frac{\psi_i}{\lAvg{\fSt}}\, \dd v
      +
      \int \nabla_v\cdot\left(G
      \left(
      \lAvg{f} \frac{\fSt}{\lAvg{\fSt}}
      - f
      \right)\right)\,
      \frac{\psi_i}{\lAvg{\fSt}}\, \dd v
    \\
    &\quad - \frac 12
      \int
      \fSt^{-1}
      \left(
      v \cdot \nabla_x \fSt + \nabla_v \cdot(G\fSt)
      \right)
      \left(
      \lAvg{f} \frac{\fSt}{\lAvg{\fSt}}
      - f
      \right)\,
      \frac{\psi_i}{\lAvg{\fSt}}\, \dd v\\
    &\quad - \frac 12
      \int
      (L-L^*)
      \left(
      \lAvg{f} \frac{\fSt}{\lAvg{\fSt}}
      - f
      \right)\,
      \frac{\psi_i}{\lAvg{\fSt}}\, \dd v\\
    &= \partial_t
      \int
      \left(
      \lAvg{f} \frac{\fSt}{\lAvg{\fSt}}
      - f
      \right)\,
      \frac{\psi_i}{\lAvg{\fSt}}\, \dd v
      + \nabla_x \cdot
      \int
      \left(
      \lAvg{f} \frac{\fSt}{\lAvg{\fSt}}
      - f
      \right)\,
      \frac{v \psi_i}{\lAvg{\fSt}}\, \dd v\\
    &\quad -
      \int
      \left(
      \lAvg{f} \frac{\fSt}{\lAvg{\fSt}}
      - f
      \right)\,
      v \cdot \nabla_x\left(\frac{\psi_i}{\lAvg{\fSt}}\right)\, \dd v
      -
      \int
      \left(
      \lAvg{f} \frac{\fSt}{\lAvg{\fSt}}
      - f
      \right)\,
      \frac{G \cdot \nabla_v \psi_i}{\lAvg{\fSt}}\, \dd v\\
    &\quad- \frac 12
      \int
      (\fSt^{-1} L \fSt + L - L^*)
      \left(
      \lAvg{f} \frac{\fSt}{\lAvg{\fSt}}
      - f
      \right)\,
      \frac{\psi_i}{\lAvg{\fSt}}\, \dd v.
  \end{aligned}
\end{equation*}
For the second term on the RHS of \eqref{eq:starting-density-control},
we use that \((\partial_t - A_a)f = A_s f\) so that
\begin{equation*}
  \begin{aligned}
    \int_{v\in\R^d}
    (\partial_t - A_a)
    f\,
    \frac{\psi_i}{\lAvg{f}}\, \dd v
    &=
      \frac 12
      \int
      (- \fSt^{-1} L \fSt + L + L^*) f\,
      \frac{\psi_i}{\lAvg{\fSt}}.
  \end{aligned}
\end{equation*}
Hence we find a representation \eqref{eq:derivatives-density} with
(using again the notation \(v_j=1\) for \(j=0\))
\begin{equation}\label{eq:def-j}
  J_{ij}
  =
  \int
  \left(
    \lAvg{f} \frac{\fSt}{\lAvg{\fSt}}
    - f
  \right)\,
  \frac{v_j \psi_i}{\lAvg{\fSt}}\, \dd v
\end{equation}
and
\begin{equation}\label{eq:def-k}
  \begin{aligned}
    K_i
    &=
      -
      \int
      \left(
      \lAvg{f} \frac{\fSt}{\lAvg{\fSt}}
      - f
      \right)\,
      v \cdot \nabla_x\left(\frac{\psi_i}{\lAvg{\fSt}}\right)\, \dd v\\
    &\quad-
      \int
      \left(
      \lAvg{f} \frac{\fSt}{\lAvg{\fSt}}
      - f
      \right)\,
      \frac{G \cdot \nabla_v \psi_i}{\lAvg{\fSt}}\, \dd v\\
    &\quad- \frac 12
      \int
      (\fSt^{-1} L \fSt + L - L^*)
      \left(
      \lAvg{f} \frac{\fSt}{\lAvg{\fSt}}
      - f
      \right)\,
      \frac{\psi_i}{\lAvg{\fSt}}\, \dd v \\
    &\quad+
      \frac 12
      \int
      (- \fSt^{-1} L \fSt + L + L^*) f\,
      \frac{\psi_i}{\lAvg{\fSt}}\, \dd v.
  \end{aligned}
\end{equation}

We can bound \(J_{ij}\) as
\begin{equation*}
  |J_{ij}|^2 \lAvg{\fSt}
  \le
  \left(
    \int \left(
      \lAvg{f} \frac{\fSt}{\lAvg{\fSt}}
      - f
    \right)^2 \, \frac{\dd v}{\fSt}
  \right)
  \left(
    \int
    \frac{|v_j \psi_i|^2}{\lAvg{\fSt}^2}
    \fSt\, \dd v\, \lAvg{\fSt}
  \right).
\end{equation*}
The second bracket is uniformly bounded so that the local coercivity
\hypref{h:local} implies the bound \eqref{eq:bound-j}.

For the first term of \(K_i\) we find the bound
\begin{equation*}
  \begin{aligned}
    &\left|
      \int
      \left(
      \lAvg{f} \frac{\fSt}{\lAvg{\fSt}}
      - f
      \right)\,
      v \cdot \nabla_x\left(\frac{\psi_i}{\lAvg{\fSt}}\right)\, \dd v
      \right|^2 \frac{\lAvg{\fSt}}{w}\\
    &\le
      \left(
      \int \left(
      \lAvg{f} \frac{\fSt}{\lAvg{\fSt}}
      - f
      \right)^2 \, \frac{\dd v}{\fSt}
      \right)
      \left(
      \int
      |v|^2
      \left|\nabla_x \left(\frac{\psi_i}{\lAvg{\fSt}}\right) \right|^2
      \fSt\, \dd v\, \frac{\lAvg{\fSt}}{w}
      \right).
  \end{aligned}
\end{equation*}
By the control of \(\nabla_x \psi\) from
\cref{thm:construction-test-function} and the definition of \(w\), the
local coercivity again yields the claimed bound of \eqref{eq:bound-k}
for this term.

The second term of \(K_i\) is bounded similarly using the definition
of \(w\).

In the case \(L = \collFP\), we find for the third term that
\begin{equation*}
  \begin{aligned}
    &\int
      (\fSt^{-1} L \fSt + L - L^*)
      \left(
      \lAvg{f} \frac{\fSt}{\lAvg{\fSt}}
      - f
      \right)\,
      \frac{\psi_i}{\lAvg{\fSt}}\, \dd v \\
    &= 2
      \int
      \nabla_v\left(\fSt^{-1}
      \left(
      \lAvg{f} \frac{\fSt}{\lAvg{\fSt}}
      - f
      \right)
      \right)
      \fSt \frac{\nabla_v \psi}{\lAvg{\fSt}} \\
    &\quad+ 2
      \int
      \fSt^{-1}
      \left(
      \lAvg{f} \frac{\fSt}{\lAvg{\fSt}}
      - f
      \right)
      \frac{\fSt}{M}
      \frac{\nabla_v(M\nabla \psi)}{\lAvg{\fSt}}.
  \end{aligned}
\end{equation*}
For the first term note that
\( \nabla_v\left(\fSt^{-1} \left( \lAvg{f} \frac{\fSt}{\lAvg{\fSt}} -
    f \right) \right) = \nabla_v(f/\fSt)\) which is controlled by the
dissipation in this case. For the second term use
\eqref{eq:regularity-m-fp}. Hence we find the required bound of
\eqref{eq:bound-k} for this term.

In the case \(L = \collBGK\), we find for the third term that
\begin{equation*}
  \begin{aligned}
    &\int
      (\fSt^{-1} L \fSt + L - L^*)
      \left(
      \lAvg{f} \frac{\fSt}{\lAvg{\fSt}}
      - f
      \right)\,
      \frac{\psi_i}{\lAvg{\fSt}}\, \dd v \\
    &= \int \fSt^{-1} (\lAvg{\fSt}M-\fSt)
      \left(
      \lAvg{f} \frac{\fSt}{\lAvg{\fSt}}
      - f
      \right)\,
      \frac{\psi_i}{\lAvg{\fSt}}\, \dd v \\
    &\quad -
      \int
      \fSt
      \lAvg{
      \frac{M}{\fSt}
      \left(
      \lAvg{f} \frac{\fSt}{\lAvg{\fSt}}
      - f
      \right)
      }
      \frac{\psi_i}{\lAvg{\fSt}}\, \dd v.
  \end{aligned}
\end{equation*}
The bound for the first term follows form \hypref{h:local} and
\eqref{eq:regularity-m-bgk}.  The last term can be rewritten as
\begin{equation*}
  \begin{aligned}
    &\int
      \fSt
      \lAvg{
      \frac{M}{\fSt}
      \left(
      \lAvg{f} \frac{\fSt}{\lAvg{\fSt}}
      - f
      \right)
      }
      \frac{\psi_i}{\lAvg{\fSt}}\, \dd v \\
    &=
      \left(
      \int_{v,v_*}
      \frac{M(v)\fSt(x,v_*)}{\lAvg{\fSt}}
      \left[
      \frac{f(x,v_*)}{\fSt(x,v_*)}
      -
      \frac{f(x,v)}{\fSt(x,v)}
      \right]
      \right)
      \left(
      \int \frac{\fSt}{\lAvg{\fSt}} \psi_i \dd v
      \right)
  \end{aligned}
\end{equation*}
so that we find the claimed bound with the dissipation.

For the fourth term, note in the case \(L=\collFP\) that
\begin{equation*}
  \begin{aligned}
    \int
    (- \fSt^{-1} L \fSt + L + L^*) f\,
    \frac{\psi}{\lAvg{\fSt}}\, \dd v
    = -2
    \int
    \nabla\left(\frac{f}{\fSt}\right)
    \fSt \frac{\nabla \psi}{\lAvg{\fSt}}\, \dd v
  \end{aligned}
\end{equation*}
which yields the claimed bound. In the case \(L = \collBGK\) we find
\begin{equation*}
  \begin{aligned}
    &\int
    (- \fSt^{-1} L \fSt + L + L^*) f\,
    \frac{\psi}{\lAvg{\fSt}}\, \dd v\\
    &=
    \int_{v,v_*}
    \left[
    \frac{f(x,v_*)}{\fSt(x,v_*)}
    -
    \frac{f(x,v)}{\fSt(x,v)}
    \right]
    [M(x,v) \fSt(x,v_*)
    + M(x,v_*) \fSt(x,v)]
    \frac{\psi_i(v)}{\lAvg{\fSt}}
    \,\dd v\, \dd v_*
  \end{aligned}
\end{equation*}
which again yields the claimed bound by the dissipation.

\subsection{Conclusion}

As we assume zero overall mass, we find that
\begin{equation*}
  \int_{[0,T]\times\R^d} \lAvg{f}\, \dd t\, \dd x = 0
\end{equation*}
so that we can apply \hypref{h:spatial} to find \(\vec F\) such that
\begin{equation*}
  \nabla_{t,x} \cdot \vec F = \lAvg{f}
\end{equation*}
and
\begin{equation}\label{eq:bounds-applied-F}
  \int_{t,x \in [0,T] \times \R^d} \left[|\vec F|^2 w + |\nabla \vec F|^2\right]
  \frac{\dd t\, \dd x}{\lAvg{\fSt}}
  \le \lambda_2
  \| \lAvg{f} \|_{\fsWL^2([0,T]\times\R^d)}^2,
\end{equation}
where we use the analogous weighted norm over \(t\) and \(x\) as
\begin{equation*}
  \| \lAvg{f} \|_{\fsWL^2([0,T]\times\R^d)}^2
  = \int_{t,x \in [0,T] \times \R^d}
  \lAvg{f}^2
  \frac{\dd t\, \dd x}{\lAvg{\fSt}}.
\end{equation*}

Using \eqref{eq:derivatives-density} we therefore find
\begin{equation*}
  \begin{aligned}
    \| \lAvg{f} \|_{\fsWL^2([0,T]\times\R^d)}^2
    &=
    \int_{[0,T]\times\R^d} \frac{\lAvg{f}}{\lAvg{\fSt}} \nabla_{t,x}
      \cdot \vec F\, \dd t\, \dd x \\
    &= - \int_{[0,T]\times\R^d}
      (K_i + \partial_j J_{ij}) F_i \, \dd t\, \dd x \\
    &= - \int_{[0,T]\times\R^d} K_i\, F_i \, \dd t\, \dd x
      + \int_{[0,T]\times\R^d} J_{ij} \partial_j F_i \, \dd t\, \dd x\\
    &\le \left(
      \int |K|^2 \frac{\lAvg{\fSt}}{w}
      \right)^{1/2}
      \left(
      \int |\vec F|^2 \frac{w}{\lAvg{\fSt}}
      \right)^{1/2}\\
    &\quad +
      \left(
      \int |J|^2 \lAvg{\fSt}
      \right)^{1/2}
      \left(
      \int |\nabla \vec F|^2 \frac{1}{\lAvg{\fSt}}
      \right)^{1/2}.
  \end{aligned}
\end{equation*}
Using the bound \eqref{eq:bounds-applied-F} for \(\vec F\) and
\eqref{eq:bound-k} for \(K\) and \eqref{eq:bound-j} for \(J\) we
therefore find
\begin{equation*}
  \| \lAvg{f} \|_{\fsWL^2}^2
  \lesssim \int_0^T \diss\, \dd t.
\end{equation*}
This was the remaining bound to be proven
\eqref{eq:spatial-decay-criterion} for the exponential decay. Hence
the proof of exponential decay (\cref{thm:exponential-decay}) is
finished.

\section{Weak spatial confinement}

In this section, we prove \cref{thm:polynomial-decay}.  The proof
commences as for the exponential decay in
\cref{sec:exponential-decay}.  It runs through unchanged until the
last step, where, by the loss of weight, we only find the control
\begin{equation*}
  \| \lAvg{f} \|_{\fsWL^2([0,T]\times\R^d)}^2
  \lesssim
  \left(
    \int_0^T \diss\, \dd t
  \right)^{1/2}
  \| \lAvg{f} (1+x^2)^{\ell/2}\|_{\fsWL^2([0,T]\times\R^d)}
\end{equation*}
By interpolating
\(\| \lAvg{f} (1+x^2)^{\ell/2}\|_{\fsWL^2([0,T]\times\R^d)}\) between
\(\| \lAvg{f} \|_{\fsWL^2([0,T]\times\R^d)}\) and
\({\| \lAvg{f} (1+x^2)^{k/2}\|_{\fsWL^2([0,T]\times\R^d)}}\), which we
assume to be bounded in \hypref{h:spatial-weak}, we find
\begin{equation*}
  \| \lAvg{f} \|_{\fsWL^2([0,T]\times\R^d)}^{2(1+a)}
  \lesssim
  \int_0^T \diss\, \dd t,
\end{equation*}
where \(a=\ell/k\).

Combining the estimates, we therefore find after the time \(T\) that
\begin{equation*}
  \epsilon
  \min\left(
    \frac{\| f \|_{\fsWL^2([0,T]\times\R^{2d})}^{2}}{T},
    \left(\frac{\| f \|_{\fsWL^2([0,T]\times\R^{2d})}^{2}}{T}\right)^{1+a}
  \right)
  \le
  \int_0^T \diss\, \dd t,
\end{equation*}
where we may assume that \(\epsilon < 1\). Then we find exponential
decay as before as long as \(\| f \|_{\fsWL^2} \ge 1\) which is
faster than the claimed algebraic decay.

Hence it remains to show the algebraic decay when
\(\| f \|_{\fsWL^2} \le 1\). For this, let us denote the values at
times \(t=nT\), \(n \in \N\), as
\begin{equation*}
  Y_n = \| f(t=nT) \|_{\fsWL^2}
\end{equation*}
where we restrict to \(Y_n \le 1\). Then the above estimate shows that
\begin{equation*}
  \epsilon Y_{n+1}^{2(1+a)}
  \le Y_n^2 - Y_{n+1}^2.
\end{equation*}
This inequality implies elementary that
\begin{equation}\label{eq:decay-y-n}
  Y_{n+1}^2 \le Y_n^2 - \epsilon 2^{-2(1+a)} Y_n^{2(1+a)}.
\end{equation}
To see this, one can assume that we find a bound with
\(Y_{n+1} \ge Y_n/2\) and then replace \(Y_{n+1}^{2(1+a)}\) by
\(2^{-2(1+a)} Y_n\).  Then the assumption can be verified for the
assumed bound.

Then \eqref{eq:decay-y-n} implies that
\begin{equation*}
  \frac{1}{Y_{n+1}^{2a}}
  \ge \frac{1}{Y_n^{2a}} \;
  \left(
    1 - \epsilon 2^{-2(1+a)} Y_n^{2a}
  \right)^{-2a}
\end{equation*}
as
\(\left( 1 - \epsilon 2^{-2(1+a)} Y_n^{2a} \right)^{-2a} \ge 1 + 2a
\epsilon 2^{-2(1+a)} Y_n^{2a}\)
so that we have found
\begin{equation*}
  \frac{1}{Y_{n+1}^{2a}}
  \ge \frac{1}{Y_n^{2a}}
  +
  2a
  \epsilon 2^{-2(1+a)}.
\end{equation*}
This implies that
\begin{equation*}
  Y_n \lesssim (1+n)^{-\frac{1}{2a}},
\end{equation*}
which is the claimed algebraic decay.

\appendix

\section{Relation of Bogovskii inequality and Poincaré inequality}
\label{sec:bogovskii-poincare}

In this appendix, we discuss the relation of the existence of a
suitable Poincaré inequality and the spatial assumption
\hypref{h:spatial} and \hypref{h:spatial-weak}, respectively. For
details and further discussion, we refer to
\cite{dietert-hérau-hutridurga-mouhot-2022-preprint-quantitative-geometric-control-linear-kinetic-theory}.

We collect the basic construction in the following lemma, which
incorporates possible weighted weaker forms.
\begin{lemma}\label{thm:poincare-to-bogovskii}
  Suppose a domain \(\sU \subset \R^d\), \(d \in \N\), with a nice
  boundary and a potential \(\Phi \in C^1(\R^d)\) satisfying
  \(|\nabla^2 \Phi| \lesssim 1 + |\nabla \Phi|\), a weight \(W \ge 0\)
  and constant \(C_P>0\) such that for all \(h : \R^d \to \R\) with
  \(\int_{\sU} h\, \ee^{-\Phi} = 0\) it holds that
  \begin{equation*}
    \int_{\sU} |h|^2\, W\, \ee^{-\Phi}
    \le C_P
    \int_{\sU} |\nabla h|^2\, \ee^{-\Phi}.
  \end{equation*}
  Then there exists a constant \(C_B\) so that for every
  \(g : \sU \to \R\) with \(\int_{\sU} g = 0\), there exists a
  vector field \(\vec F : \sU \to \R^d\) such that
  \begin{equation*}
    \left\{
      \begin{aligned}
        &\nabla \cdot \vec F = g
        & &\text{in } \sU\\
        & \vec F = 0
        & &\text{on } \partial \sU
      \end{aligned}
    \right.
  \end{equation*}
  and
  \begin{equation*}
    \int_{\sU} |\vec F|^2\, \ee^{\Phi}
    \le C_B
    \int_{\sU} |g|^2\, \frac{\ee^{\Phi}}{W}
  \end{equation*}
  and
  \begin{equation*}
    \int_{\sU} |\nabla \vec F|^2\, \frac{\ee^{\Phi}}{1+|\nabla \Phi|^2}
    \le C_B
    \int_{\sU} |g|^2\, \left(\frac 1W + \frac{1}{1+|\nabla \Phi|^2}\right) \ee^{\Phi}.
  \end{equation*}
\end{lemma}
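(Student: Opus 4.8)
The plan is to realise \(\vec F\) as a weighted gradient and to extract the two bounds separately: the \(L^2\) bound from (the explicit form of) the duality with the weighted Poincaré inequality, and the gradient bound from a weighted Bochner identity in which the structural hypothesis \(|\nabla^2\Phi|\lesssim 1+|\nabla\Phi|\) is the essential ingredient. Concretely, I would first solve the weighted elliptic problem
\[
  \nabla\cdot(\ee^{-\Phi}\nabla u) = -g \quad\text{in }\sU, \qquad \partial_n u = 0 \text{ on }\partial\sU,
\]
which is solvable precisely because \(\int_\sU g = 0\) is its compatibility condition, and set \(\vec F := -\ee^{-\Phi}\nabla u\), so that \(\nabla\cdot\vec F = g\) by construction. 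The Neumann condition yields only the vanishing of the normal trace \(\vec F\cdot n\); to upgrade to the full condition \(\vec F = 0\) on \(\partial\sU\) I would correct the tangential boundary trace by subtracting a divergence-free lifting produced by the classical (weight-free) Bogovskii operator in a boundary collar, where \(\ee^{\pm\Phi}\) and \(W\) are comparable to constants so the weights are irrelevant; this is the standard step recalled in this appendix and in the cited construction.

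For the first estimate, I would test the equation against \(u\), normalised so that \(\int_\sU u\,\ee^{-\Phi}=0\). Integration by parts gives the energy identity
\[
  \int_\sU |\nabla u|^2\,\ee^{-\Phi} = \int_\sU g\,u,
\]
and splitting the right-hand side as \(g\,\ee^{\Phi/2}W^{-1/2}\) times \(u\,W^{1/2}\ee^{-\Phi/2}\), applying Cauchy--Schwarz and then the assumed weighted Poincaré inequality (legitimate since \(\int u\,\ee^{-\Phi}=0\)), gives after absorbing that \(\int_\sU |\nabla u|^2\,\ee^{-\Phi} \le C_P \int_\sU |g|^2\,\ee^{\Phi}/W\). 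Since \(|\vec F|^2\ee^{\Phi} = |\nabla u|^2\ee^{-\Phi}\) pointwise, this is exactly the first claimed bound with \(C_B=C_P\).

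For the gradient bound I would compute \(\nabla\vec F = -\ee^{-\Phi}(\nabla^2 u - \nabla\Phi\otimes\nabla u)\), so that \(|\nabla\vec F|^2\ee^{\Phi}\lesssim \ee^{-\Phi}(|\nabla^2 u|^2 + |\nabla\Phi|^2|\nabla u|^2)\). After multiplying by \((1+|\nabla\Phi|^2)^{-1}\), the contribution of \(|\nabla\Phi|^2|\nabla u|^2\) is dominated by \(|\nabla u|^2\ee^{-\Phi}\) and hence, by the first estimate, produces the \(W^{-1}\) part of the target. The Hessian contribution \(\int (1+|\nabla\Phi|^2)^{-1}|\nabla^2 u|^2\ee^{-\Phi}\) is the core. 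Writing \(L := \Delta - \nabla\Phi\cdot\nabla\), so that \(Lu = -g\,\ee^{\Phi}\), I would use the weighted Bochner/\(\Gamma_2\) identity obtained by testing \(Lu\) against the second-order expression attached to the weight \((1+|\nabla\Phi|^2)^{-1}\ee^{-\Phi}\). Its leading term reproduces \(\int (Lu)^2(1+|\nabla\Phi|^2)^{-1}\ee^{-\Phi} = \int |g|^2(1+|\nabla\Phi|^2)^{-1}\ee^{\Phi}\), which is the remaining part of the right-hand side, while the error terms carry \(\nabla^2\Phi\), \(\nabla\Phi\otimes\nabla\Phi\) and the gradient of the weight. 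Here \(|\nabla^2\Phi|\lesssim 1+|\nabla\Phi|\) does two jobs: it controls \((\nabla^2\Phi)(\nabla u,\nabla u)\) relative to the weight by \(|\nabla u|^2\), and it bounds the logarithmic derivative \(|\nabla(1+|\nabla\Phi|^2)^{-1}|/(1+|\nabla\Phi|^2)^{-1}\lesssim 1\); every error term thereby reduces to \(\int|\nabla u|^2\ee^{-\Phi}\) and is absorbed by the first estimate.

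I expect the weighted Bochner step to be the main obstacle: one must carry the weight \((1+|\nabla\Phi|^2)^{-1}\) through two integrations by parts without losing a power of \(|\nabla\Phi|\), which is exactly where the hypothesis on \(\nabla^2\Phi\) is indispensable, and one must control the boundary contributions of the identity together with the tangential Bogovskii correction. On a slab such as \((0,T)\times\R^d\) — the case relevant to the applications — the boundary is flat and these terms simplify, so the general nice-boundary case follows by localising the same computation near \(\partial\sU\).
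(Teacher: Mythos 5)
Your opening step coincides with the paper's: both solve the weighted Neumann problem \(\nabla\cdot(\ee^{-\Phi}\nabla u)=\pm g\), set \(\vec F_0=\ee^{-\Phi}\nabla u\), and get the first estimate from duality plus the weighted Poincar\'e inequality, using \(|\vec F_0|^2\ee^{\Phi}=|\nabla u|^2\ee^{-\Phi}\). Your weighted Bochner identity is then a genuinely different route to the Hessian part of the gradient bound, and the arithmetic behind it is sound: with \(\rho=(1+|\nabla\Phi|^2)^{-1}\), the hypothesis \(|\nabla^2\Phi|\lesssim 1+|\nabla\Phi|\) gives \(|\nabla\rho|\lesssim\rho\) and \(|\nabla^2\Phi|\,\rho\lesssim 1\), so the commutator and weight-derivative terms reduce to \(\int|\nabla u|^2\ee^{-\Phi}\) and can be absorbed — at least on a flat boundary such as the slab, where the boundary terms produced by the two integrations by parts vanish under the Neumann condition. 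By contrast, the paper never estimates \(\nabla^2u\) at all: it uses the hypothesis on \(\nabla^2\Phi\) only to make \(\ee^{\Phi}\) slowly varying at scale \((1+|\nabla\Phi|)^{-1}\), and obtains the gradient bound from the classical Bogovski\u{\i} operator applied locally at that scale.

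The genuine gap is the upgrade from \(\vec n\cdot\vec F=0\) to \(\vec F=0\) on \(\partial\sU\). You propose to correct the tangential trace with the weight-free Bogovski\u{\i} operator in a boundary collar, ``where \(\ee^{\pm\Phi}\) and \(W\) are comparable to constants so the weights are irrelevant.'' That premise is false in the setting of the lemma: \(\sU\) is not assumed bounded, and in the paper's only application \(\sU=[0,T]\times\R^d\), so the collar near \(\{0\}\times\R^d\) is an unbounded set along which \(\ee^{\Phi}\) and \(W\) vary without bound. A weight-free estimate on such a collar gives no control in the weighted norms of the conclusion, and the correction field itself must satisfy both weighted estimates of the lemma — your argument provides neither. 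This cannot be dismissed as a technicality to be ``localised near \(\partial\sU\)'': the localisation must be done at the weight-adapted scale \((1+|\nabla\Phi|)^{-1}\), with a partition of unity satisfying \(|\nabla\theta_k|\lesssim(1+|\nabla\Phi|^2)^{1/2}\), and classical Bogovski\u{\i} solves \(\nabla\cdot\vec F_k=\nabla\cdot(\theta_k\vec F_0)=\theta_k g+\nabla\theta_k\cdot\vec F_0\) on each piece \(B_k\cap\sU\). This is exactly the paper's construction; it settles the boundary condition and the gradient bound simultaneously, and the loss \(|\nabla\theta_k|\lesssim(1+|\nabla\Phi|^2)^{1/2}\) is precisely why the gradient estimate carries the weight \(\ee^{\Phi}/(1+|\nabla\Phi|^2)\) and the extra term \(|g|^2\ee^{\Phi}/(1+|\nabla\Phi|^2)\) on its right-hand side. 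Once you carry out your trace correction at this local scale over the whole (unbounded) boundary, you have reproduced the paper's proof and the Bochner computation becomes superfluous.
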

For a discussion of possible boundaries, we refer to
\cite{dietert-hérau-hutridurga-mouhot-2022-preprint-quantitative-geometric-control-linear-kinetic-theory}.
In this work, we only apply it to \(\sU = [0,T] \times \R^d\) where
the boundary at \(t=0\) and \(t=T\). For the details, we again refer
to
\cite{dietert-hérau-hutridurga-mouhot-2022-preprint-quantitative-geometric-control-linear-kinetic-theory}
and just sketch the main arguments with the more general weights here.
\begin{proof}[Proof sketch]
  We first solve the elliptic problem
  \begin{equation*}
    \left\{
      \begin{aligned}
        &\nabla \cdot (\ee^{-\Phi} \nabla h) = g
        & &\text{in } \sU\\
        &\vec{n} \cdot (\ee^{-\Phi} \nabla h) = 0
        & &\text{on } \partial\sU\\
      \end{aligned}
    \right.
  \end{equation*}
  for \(h\) with \(\int_{\sU} h\, \ee^{-\Phi}=0\). By the assumed
  Poincaré inequality, we find a solution and defining
  \(\vec F_0 = \ee^{-\Phi} \nabla h\) it yields the bound
  \begin{equation*}
    \int |\vec F_0|^2 \ee^{\Phi}
    \lesssim
    \int |g|^2\, \frac{\ee^{\Phi}}{W}
  \end{equation*}
  Moreover, \(\nabla \cdot \vec F_0 = g\).

  We then find a covering \((B_k)_k\) of the domain \(\sU\) with a
  corresponding partition of unity \((\theta_k)_k\) where each
  component is of diameter comparable to
  \((1+|\nabla \Phi|^2)^{-1/2}\) so that in each component
  \(\ee^{\Phi}\) is like a constant in the sense that the weight
  \(\ee^{\Phi}\) is only varying by a uniformly bounded
  factor. Moreover, we can ensure that
  \(|\nabla \theta_k| \lesssim (1+|\nabla \Phi|^2)^{1/2}\).

  On each component consider
  \(g_k = \nabla \cdot (\theta_k \vec F_0)\) which satisfies
  \(\int g_k = 0\). Hence we can find on each component \(B_k\) a
  vector field \(\vec F_k\) vanishing outside the component \(B_k\) so
  that \(\nabla \cdot \vec F_k = g_k\) and
  \begin{equation*}
    \| \vec F_k \|_{\fsL^2(B_k)} \lesssim \| \theta_k \vec F_0 \|_{\fsL^2(B_k)}
    \quad\text{and}\quad
    \| \nabla \vec F_k \|_{\fsL^2(B_k)} \lesssim \| g_k \|_{\fsL^2(B_k)}.
  \end{equation*}

  Then \(\vec F = \sum_k \vec F_k\) is the sought vector field.
\end{proof}

For verifying \hypref{h:spatial} as in \cref{rem:spatial-poincare}, we
apply the above lemma with \((t,x) \in [0,T] \times \R^d\) and take
\(\Phi\) such that \(\ee^{\Phi}\) is comparable to \(w \lAvg{\fSt}\)
and chose \(W=w\).

In the case of a weaker spatial confinement, we take as before
\(\Phi\) and only assume for \(\ell > 0\) the weaker Poincaré
inequality
\begin{equation*}
  \int_{\sU} |h|^2 \frac{w}{(1+|x|^2)^\ell} \ee^{-\Phi}
  \lesssim
  \int_{\sU} |\nabla h|^2 \ee^{-\Phi}.
\end{equation*}
Then the previous lemma with \(W=w(1+|x|^2)^\ell\) implies
\hypref{h:spatial-weak}.

\printbibliography

\end{document}